\newtheorem{thm}{Theorem}
\newtheorem{conj}{Conjecture}
\newtheorem{prop}{Proposition}
\newtheorem{defn}{Definition}
\newtheorem{rem}{Remark}
\newtheorem{exmp}{Example}
\newtheorem{lem}{Lemma}
\begin{document}
\title{\textbf{Remarks on logarithmic K-stability}}
\author{Chi Li}
\date{}
\maketitle \vspace*{-30pt}
\begin{abstract}
\noindent ABSTRACT: We make some observation on the logarithmic version of K-stability.
\end{abstract}

\section{Introduction}

Let $(X,J)$ be a Fano manifold, that is, $K_X^{-1}$ is ample. The basic problem in K\"{a}hler geometry is to determine whether $(X, J)$ has a 
K\"{a}hler-Einstein metric (cf. \cite{T}) 

On way to attack this problem is to use continuity method.  Fix a reference K\"{a}hler metric ${{\omega}}\in c_1(X)$. Its Ricci curvature
$Ric({{\omega}})$ also lies in $c_1(X)$. So there exists $h_{{\omega}}\in C^{\infty}(X)$ such that
\[
Ric({{\omega}})-{{\omega}}=\partial\bar{\partial}h_{{\omega}},\quad \int_X e^{h_{{\omega}}}{{\omega}}^n=\int_X{{\omega}}^n
\]
Consider the following family of Monge-Amp\`{e}re equations.
\begin{align}
({{\omega}}+\partial\bar{\partial}\phi_t)^n=e^{h_{{\omega}}-t\phi}{{\omega}}^n\tag*{$(*)_t$}\label{CMAt}
\end{align}
This is equivalent to the equation for K\"{a}hler forms:
\begin{equation}\label{cont1}
Ric(\omega_\phi)=t\omega_\phi+(1-t)\omega
\end{equation}

By Yau's theorem \cite{Yau}, we can always solve \ref{CMAt} for $t=0$. If we could solve \ref{CMAt} for $t=1$, we would get 
K\"{a}hler-Einstein metric.  However, it was first showed by Tian \cite{T2} that we may not be able to
solve \ref{CMAt} on certain Fano manifold for $t$ sufficiently close
to 1. Equivalently, for such a Fano manifold, there is some $t_0<1$,
such that there is no K\"{a}hler metric $\omega$ in $c_1(X)$ which
can have $Ric(\omega)\ge t_0\omega$.  

The existence problem of K\"{a}hler-Einstein metric is a special case of the existence problem of constant scalar curvature K\"{a}hler (cscK) metric.  For the latter, we fix an ample line bundle $L$ on $(X, J)$. 
We have the following folklore conjecture.
For the definition of K-stability, see \cite{T97}, \cite{Dol02} or Definition \ref{dflgkstb}. 

\begin{conj}[Tian-Yau-Donaldson](\cite{T97},\cite{Dol02})
There is a smooth constant scalar curvature K\"{a}hler metric in $c_1(L)$ on $(X,J)$ if and only if $(X,J, L)$ is K-stable.
\end{conj}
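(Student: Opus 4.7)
Since this is the Tian--Yau--Donaldson conjecture, widely regarded as a major open problem at the time of writing, any realistic proof proposal must split the ``only if'' and ``if'' directions, which have quite different flavors, and use the Mabuchi K-energy as the common bridge between metrics and the Donaldson--Futaki invariant. For the ``only if'' direction, I would assume a cscK metric in $c_1(L)$ and deduce that the K-energy is bounded from below on the space of K\"ahler potentials, following a Chen--Tian type argument. Next I would identify the Donaldson--Futaki invariant of an arbitrary test configuration with, up to a positive multiple, the asymptotic slope at infinity of the K-energy along the weak geodesic ray generated by the test configuration; non-negativity would follow at once from boundedness below. Strict positivity for non-trivial test configurations would then be obtained by a Stoppa-type argument, blowing up a generic point to reduce to an automorphism-free setting in which properness of the K-energy is available.

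For the much harder reverse direction, the plan would be to run a continuity method. In the Fano setting the natural path is \ref{CMAt}: openness at $t=0$ follows from Yau \cite{Yau} and classical linearisation, and the real question is closedness up to $t=1$. Given a sequence $t_i\to T\le 1$ of solutions $\omega_{t_i}$, I would extract a Gromov--Hausdorff limit $(X_\infty,\omega_\infty)$, prove a \emph{partial $C^0$-estimate} yielding uniform Kodaira embeddings by $\omega_{t_i}$-orthonormal bases of $H^0(K_X^{-k})$, and use these embeddings to realise $X_\infty$ as the central fibre of an algebraic degeneration of $X$. That degeneration would be a test configuration whose Donaldson--Futaki invariant equals, up to sign, the derivative of the reduced K-energy at the limit; K-stability would then force $X_\infty\cong X$, and a $C^\infty$ regularity upgrade would let the continuity method pass through $T$ and reach $t=1$. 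For a general polarisation the same strategy would use Chen's scalar-curvature path in place of \ref{CMAt}.

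The chief obstacle, and the reason the conjecture is genuinely hard, is the combination of the partial $C^0$-estimate with the algebro-geometric identification of the Gromov--Hausdorff limit. Passing from a metric Cheeger--Colding limit to a polarised $\mathbb{Q}$-Fano variety with a compatible test configuration requires non-collapsed Ricci structure theory, H\"ormander $\dbar$-estimates on singular spaces, and a broadening of Definition \ref{dflgkstb} to allow non-normal or semi-stable central fibres (so that the Futaki invariant behaves well under specialisation). Even granting all of that, the final upgrade from Gromov--Hausdorff plus partial $C^0$ convergence to smooth convergence on $X$ is delicate. The realistic plan is therefore not a self-contained proof but a reduction of the conjecture to these analytic--algebraic convergence statements, which is essentially the strategy underlying the Chen--Donaldson--Sun and Tian programmes that the present paper is designed to support in the logarithmic setting.
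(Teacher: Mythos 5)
The statement you were asked to prove is labelled a \emph{Conjecture} in the paper (the Tian--Yau--Donaldson conjecture), and the paper offers no proof of it whatsoever --- it is cited as background motivation, with references to \cite{T97} and \cite{Dol02} for the formulation only. So there is no argument of the author's to compare yours against, and you were right not to manufacture one: your write-up is explicitly a programme, not a proof, and that is the honest assessment of the situation.

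As a survey of the expected strategy your sketch is accurate and well organised. The ``only if'' direction via boundedness of the K-energy, identification of the Donaldson--Futaki invariant with the slope of the K-energy along the geodesic ray of a test configuration, and a Stoppa-type blow-up argument to upgrade semistability to stability, is the route that is essentially carried out in the literature. For the converse you correctly isolate the two genuinely open obstacles: the partial $C^0$-estimate along the continuity path, and the identification of the Gromov--Hausdorff limit with the central fibre of an algebraic test configuration (together with the regularity upgrade when that central fibre is $X$ itself). These are not gaps you could have closed within the scope of this exercise; they are the content of the conjecture. The only caution I would add is that your reduction is specific to the Fano/anticanonical case via \ref{CMAt}; for a general polarisation $L$ even the choice of continuity path (or flow) with adequate a priori estimates is unresolved, so the ``reduction'' you describe is itself conditional there. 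In short: no proof exists in the paper, none is claimed, and your proposal correctly represents the problem's status rather than pretending to resolve it.
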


Return to the continuity method \ref{CMAt} and let $R(X)=\sup \{t: \mbox{\ref{CMAt} is solvable }\}$. Sz\'{e}kelyhidi proved that
\begin{prop}[\cite{S}]
\[
R(X)=\sup\{t: \exists \mbox{ a K\"{a}hler metric }\; {{\omega}}\in
c_1(X) \mbox{ such that } Ric({{\omega}})> t{{\omega}}\}
\]
\end{prop}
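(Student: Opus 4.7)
Let $S(X)$ denote the supremum on the right-hand side. The strategy is to prove $R(X)\le S(X)$ and $R(X)\ge S(X)$ separately. The easy inequality $R(X)\le S(X)$ is immediate from the Kähler-form version \eqref{cont1}: a solution $\omega_\phi$ of \ref{CMAt} at parameter $t<1$ satisfies $Ric(\omega_\phi)=t\omega_\phi+(1-t)\omega$, and $(1-t)\omega>0$ forces $Ric(\omega_\phi)>t\omega_\phi$, so every $t<R(X)$ lies in the set on the right. For the reverse inequality I would deform the reference metric through a one-parameter continuity method.

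Suppose $\omega'\in c_1(X)$ is Kähler with $Ric(\omega')>t\omega'$ for some $t<1$. Define
\[
\omega'':=\frac{1}{1-t}\bigl(Ric(\omega')-t\omega'\bigr),
\]
a Kähler form in $c_1(X)$; by construction $\omega'$ solves $Ric(\omega')=t\omega'+(1-t)\omega''$. Now interpolate the reference along $\omega_\tau:=(1-\tau)\omega+\tau\omega''$, $\tau\in[0,1]$, and consider the family of twisted equations
\[
Ric(\omega_{\phi(\tau)})=t\omega_{\phi(\tau)}+(1-t)\omega_\tau.
\]
At $\tau=1$ the solution is $\omega'$; at $\tau=0$ the equation is precisely \ref{CMAt} at parameter $t$. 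Openness in $\tau$ follows from the implicit function theorem: a solution automatically satisfies $Ric(\omega_{\phi(\tau)})>t\omega_{\phi(\tau)}$ strictly, so Lichnerowicz yields $\lambda_1(\omega_{\phi(\tau)})>t$, making the linearization $-\Delta-t$ an isomorphism on smooth mean-zero functions. If closedness also holds, the set of solvable $\tau$ is all of $[0,1]$, delivering a solution at $\tau=0$ and hence $R(X)\ge t$.

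The main obstacle is verifying the a priori $C^0$ estimate uniformly in $\tau\in[0,1]$; once this is in place, the $C^2$ and $C^{2,\alpha}$ estimates follow by Yau's standard arguments. This is exactly the setting where the Aubin-Yau-Tian estimates apply: $t$ is bounded strictly below $1$, the twist $(1-t)\omega_\tau$ is uniformly positive, and $\{\omega_\tau\}_{\tau\in[0,1]}$ is a compact family of Kähler metrics in $c_1(X)$, so the classical $C^0$ bounds for twisted complex Monge-Ampère equations transfer without change and are uniform in $\tau$. Combining both directions yields $R(X)=S(X)$, and letting $t$ range over all admissible values gives the proposition.
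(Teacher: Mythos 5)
The paper does not prove this proposition at all; it is quoted verbatim from Sz\'{e}kelyhidi's paper \cite{S}, so your proposal has to be measured against the argument there. Your easy direction ($R(X)\le S(X)$, where $S(X)$ denotes the right-hand side) is correct, and the skeleton of your hard direction --- rewrite $Ric(\omega')>t\omega'$ as a solved twisted equation, then run a continuity method in the twisting form, with openness supplied by Lichnerowicz applied to $Ric(\omega_{\phi(\tau)})>t\omega_{\phi(\tau)}$ --- is genuinely the same strategy as in \cite{S}.

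The gap is the closedness step. In potential form your family reads $(\omega+\frac{\sqrt{-1}}{2\pi}\partial\bar\partial\phi)^n=e^{F_\tau-t\phi}\omega^n$ with $F_\tau$ uniformly bounded, and for $t>0$ there is \emph{no} classical unconditional a priori $C^0$ estimate for this equation: the Aubin--Yau maximum-principle bounds work only for $t\le 0$, where the exponent has the favorable sign, and for $t>0$ the known $C^0$ estimates all require extra input ($\alpha$-invariant bounds, properness of a twisted K-energy, or monotonicity of $I-J$ along a specific path). Indeed, the failure of exactly this estimate as the parameter increases is the reason $R(X)<1$ can happen (Tian's examples cited in the introduction), so ``$t$ is bounded below $1$ and the twist is uniformly positive'' cannot suffice. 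Your appeal to ``classical $C^0$ bounds for twisted complex Monge--Amp\`ere equations transferring without change'' is therefore not a proof but a restatement of the missing step. Relatedly, you try to land at parameter exactly $t$; Sz\'{e}kelyhidi's argument only reaches parameters $s<t$ strictly (which is all the supremum statement needs), and the slack $t-s$ is essential: it is what gives a uniform strict positivity $Ric(\omega_{\phi})\ge s\omega_{\phi}+c\,\omega$ along a path in which the parameter is decreased while the twist form is rotated, and hence uniform Sobolev/Green's function bounds, which are then combined with control of the energy functionals to produce the $C^0$ bound. Without supplying that mechanism (or an equivalent one), the proposal does not close.
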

In particular, $R(X)$ is independent of reference metric ${{\omega}}$. 

There is another continuity method we can try. 
Let $Y\in |-K_X|$ be a general element, then $Y$ is a smooth Calabi-Yau hypersurface. The K\"{a}hler-Einstein metric with cone singularity 
along $Y$ of cone angle $2\pi\beta$ is a solution to the following distributional equation
\begin{equation}\label{cnkeq}
Ric(\omega)=\beta\omega+(1-\beta)\{Y\}
\end{equation}
\begin{conj}[Donaldson]\label{conj1}
There is a cone-singularity solution $\omega_\beta$ to \eqref{cnkeq} for any parameter $\beta\in (0, R(X))$. If $R(X)<1$, there is no solution for parameter
$\beta \in (R(X),1)$. 
\end{conj}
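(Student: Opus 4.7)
The plan is to address the two halves of Conjecture \ref{conj1} separately: a continuity method in the cone angle parameter $\beta$ for existence on $(0,R(X))$, and a smooth approximation argument combined with Sz\'{e}kelyhidi's proposition for non-existence on $(R(X),1)$.

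For existence, let $I\subset(0,1)$ denote the set of $\beta$ admitting a conical K\"{a}hler-Einstein solution to \eqref{cnkeq}. I would first show that $I$ contains some interval $(0,\epsilon)$, either via the log Aubin-Yau theorem applied to the pair $(X,Y)$ (whose log canonical class vanishes) combined with an implicit function theorem perturbation, or directly via pluripotential methods in the spirit of Berman. Openness of $I\cap(0,R(X))$ in $(0,R(X))$ should then follow from the implicit function theorem applied to the conical Monge-Amp\`{e}re operator in appropriate edge H\"{o}lder spaces (Donaldson): the linearization at $\omega_{\beta_0}$ is essentially $\Delta_{\omega_{\beta_0}}+\beta_0$, whose kernel modulo constants is controlled since $\beta_0<1$ lies strictly below the first positive eigenvalue of $-\Delta_{\omega_{\beta_0}}$ by a conical Lichnerowicz-type estimate. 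The main obstacle is closedness up to $R(X)$: for a sequence $\beta_k\to\beta_\infty\le R(X)$ with solutions $\omega_{\beta_k}={{\omega}}+\partial\bar{\partial}\phi_k$ (where ${{\omega}}$ is a smooth reference K\"{a}hler metric in $c_1(X)$), the crux is a uniform $C^0$ estimate on $\phi_k$. My plan is to deduce this from properness of a log-Ding or log-Mabuchi functional on the pair $(X,Y)$, identifying the properness threshold with $R(X)$ via a log $\alpha$-invariant argument. Edge H\"{o}lder bootstrapping of the Jeffres-Mazzeo-Rubinstein type then yields the regularity needed to pass to the limit and recover $\omega_{\beta_\infty}$.

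For non-existence when $R(X)<1$, suppose $\omega_\beta$ solves \eqref{cnkeq} for some $\beta\in(R(X),1)$. Writing the equation in the form $\omega_\beta^n=e^{-\beta\phi}\,\Omega\,/\,|s|_h^{2(1-\beta)}$ with $s$ a defining section of $Y\in|-K_X|$ and $h$ a smooth Hermitian metric on $\mathcal{O}(Y)$, I would regularize by replacing $|s|_h^{2(1-\beta)}$ with $(|s|_h^2+\epsilon)^{1-\beta}$ and solve the resulting smooth Monge-Amp\`{e}re equation, producing smooth K\"{a}hler metrics $\tilde\omega_\epsilon\in c_1(X)$. A direct computation gives $\mathrm{Ric}(\tilde\omega_\epsilon)=\beta\tilde\omega_\epsilon+(1-\beta)\Theta_\epsilon$, where $\Theta_\epsilon$ is smooth, cohomologous to $c_1(X)$, and converges weakly to $\{Y\}$; a uniform estimate $\Theta_\epsilon\ge -\delta_\epsilon\,\tilde\omega_\epsilon$ with $\delta_\epsilon\to 0$ then yields $\mathrm{Ric}(\tilde\omega_\epsilon)\ge(\beta-(1-\beta)\delta_\epsilon)\tilde\omega_\epsilon$, which for small $\epsilon$ strictly exceeds $R(X)$ and contradicts Sz\'{e}kelyhidi's characterization of $R(X)$. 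Of the two halves, the non-existence direction should reduce to careful bookkeeping of the curvature correction $\Theta_\epsilon$ and the smoothed potential estimates; the existence half, and in particular the $C^0$ estimate in the closedness step, remains the genuine analytic hurdle.
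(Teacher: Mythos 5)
The statement you are trying to prove is stated in the paper as a \emph{conjecture} (attributed to Donaldson); the paper offers no proof of it and only provides indirect evidence, namely Theorem \ref{main}, which verifies the predicted threshold behaviour at $\beta=R(X_\triangle)$ on the algebraic side (log-K-stability of $(X_\triangle,\beta Y)$ for toric 1-parameter subgroups) rather than on the analytic side of equation \eqref{cnkeq}. So there is no proof in the paper to compare yours against, and your text should be read as a research program rather than a proof.

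As a program it points in a sensible direction, but every step that carries the actual content is deferred. In the existence half, the identification of the properness threshold of a log-Ding/log-Mabuchi functional with $R(X)$ is not a lemma you can quote; it is essentially equivalent to the conjecture itself, and you give no argument for it. The openness step also hides real difficulties: the edge H\"{o}lder (linear) theory and the Lichnerowicz-type eigenvalue bound for conical metrics are only available under restrictions on the cone angle in Donaldson's framework, and extending them to all $\beta<1$ is itself nontrivial. In the non-existence half, the sentence ``solve the resulting smooth Monge-Amp\`{e}re equation'' is a gap: the regularized equation $(\omega+\partial\bar\partial\phi)^n=e^{-\beta\phi}\Omega/(|s|_h^2+\epsilon)^{1-\beta}$ has the exponential term with the unfavourable sign, so Yau's theorem does not apply and solvability must be extracted from the assumed conical solution (e.g.\ by an implicit function or variational argument); likewise the uniform control of $\Theta_\epsilon$ against the unknown metric $\tilde\omega_\epsilon$ needs the specific choice of Hermitian metric $h$ with nonnegative curvature (possible here since $[Y]=-K_X$ is ample) and should be stated as such rather than as an unexplained estimate $\Theta_\epsilon\ge-\delta_\epsilon\tilde\omega_\epsilon$. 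Until these points are supplied, you have an outline of where a proof might live, not a proof.
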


The purpose of this note is to discuss the logarithmic version of K-stability and prove the following result.

\begin{thm}\label{main}
Let $X_\triangle$ be a toric Fano variety with a $(\mathbb{C}^*)^n$ action. Let $Y$ be a general hyperplane section of $X_\triangle$. 
When $\beta<R(X_\triangle)$, $(X_\triangle, \beta Y)$ is log-K-stable along any 1 parameter subgroup in $(\mathbb{C}^*)^n$. When $\beta=R(X_\triangle)$, $(X_\triangle, \beta Y)$ is semi-log-K-stable along any 1 parameter subgroup in $(\mathbb{C}^*)^n$ and there is a 1 parameter subgroup in $(\mathbb{C}^*)^n$ which has vanishing log-Futaki invariant. When $\beta>R(X_\triangle)$, $(X_\triangle,\beta Y)$ is not log-K-stable.
\end{thm}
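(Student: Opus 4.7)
The strategy is to compute the log-Futaki invariant of $(X_\triangle, \beta Y)$ along a 1-PS $\xi \in N_\mathbb{R}$ of the torus $(\mathbb{C}^*)^n$, express it in terms of polytope data, and compare its sign with Li's (and Wang--Zhu's) characterization
\[
R(X_\triangle) \;=\; \sup\bigl\{t \in [0,1] : -\tfrac{t}{1-t}\, P_c \in \triangle\bigr\},
\]
where $P_c$ is the barycenter of the reflexive moment polytope $\triangle$ containing $0$ in its interior.

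First I identify the limit $Y_\xi := \lim_{t \to 0}\lambda_\xi(t)\cdot Y$. Writing the defining section as $f = \sum_{m \in \triangle \cap M} a_m \chi^m$ with all $a_m \neq 0$ (generic $Y$) and using $\chi^m(\lambda_\xi(t)^{-1} x) = t^{-\langle \xi,m\rangle}\chi^m(x)$, one sees that after rescaling only the unique monomial of maximal weight survives, so $Y_\xi = \mathrm{div}(\chi^{M_\xi})$, where $M_\xi \in \triangle \cap M$ is the (generic) vertex maximizing $\langle \xi,\cdot\rangle$. In particular, the central fiber of the associated test configuration, $(X_\triangle, \beta Y_\xi)$, is torus-invariant, so its log-Futaki invariant is computable entirely from polytope data.

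Second, using Donaldson's log-Futaki definition (or, equivalently, differentiating the log-Ding functional along $\xi$ on torus-invariant potentials), the invariant decomposes as $F_\beta(\xi) = \beta F_X(\xi) + (1-\beta)\,\tilde F_Y(\xi)$ up to a positive constant. A standard toric computation gives $F_X(\xi) \propto \langle \xi, P_c\rangle$, and the contribution from the torus-fixed limit $Y_\xi$ yields $\tilde F_Y(\xi) \propto h_\triangle(\xi) := \max_{x \in \triangle}\langle \xi, x\rangle = \langle \xi, M_\xi\rangle$. Combining (up to an overall sign depending on convention),
\[
F_\beta(\xi) \;\propto\; \beta\,\langle \xi, P_c\rangle \;+\; (1-\beta)\, h_\triangle(\xi).
\]
By support-function duality, the inequality $\beta\langle\xi, P_c\rangle + (1-\beta) h_\triangle(\xi) \geq 0$ for all $\xi \in N_\mathbb{R}$ is equivalent to the containment $-\tfrac{\beta}{1-\beta}P_c \in \triangle$; strict containment in $\mathrm{int}(\triangle)$ corresponds to strict inequality for all $\xi \neq 0$, and failure of containment corresponds to violation for some $\xi$. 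Matching with Li's formula yields the three cases of the theorem, the unique vanishing 1-PS at $\beta = R$ being the direction $\xi_0$ whose maximizing vertex $M_{\xi_0}$ realizes the boundary point $-\tfrac{R}{1-R}P_c$.

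\textbf{Main obstacle.} The crux is deriving $\tilde F_Y(\xi) \propto h_\triangle(\xi)$ with the correct normalization relative to $F_X(\xi)$. The cleanest route is via the log-Ding functional: its additional term $\tfrac{1-\beta}{V}\int_X \log |s_Y|^2\, e^{h_\omega}\omega^n$, when differentiated along $\xi$ on torus-invariant potentials, reduces through the convex-function/polytope dictionary to an integral involving $\max_{m \in \triangle \cap M}\langle \xi, m\rangle$. Alternatively one can compute Donaldson's Chow-weight contribution of the toric-invariant limit $Y_\xi$ directly. Keeping constants and signs consistent so the final inequality matches Li's geometric formula for $R(X_\triangle)$ is the most delicate computational step.
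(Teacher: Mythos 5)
Your plan is correct and follows essentially the same route as the paper: compute the toric log-Futaki invariant as (up to sign and a volume factor) $\beta\langle P_c,\lambda\rangle+(1-\beta)W(\lambda)$ with $W(\lambda)=h_\triangle(\lambda)$ the support function, then observe that nonnegativity for all $\lambda$ is equivalent to $-\frac{\beta}{1-\beta}P_c\in\triangle$ and invoke Li's formula for $R(X_\triangle)$; the paper pins down the $Y$-contribution via the restriction sequence $H^0(Y_0,L^k)\cong H^0(X,L^k)/(s_0\otimes H^0(X,L^{k-1}))$, which gives $\tilde a_0=(n+1)a_0+W(\lambda)b_0$ and settles the normalization you flag as the delicate step. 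The only small imprecision is that the flat limit $Y_0$ is cut out by the sum of monomials over the whole maximizing \emph{face} $\mathcal{F}_\lambda$ (a vertex only for generic $\lambda$), but its $\mathbb{C}^*$-weight is $-W(\lambda)$ either way, so nothing in the argument changes.
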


This explains and generalizes slightly the calculation in \cite{Dol11} and gives some evidence for the Conjecture \ref{conj1} (Combined with Conjecture \ref{logTYD}).

We prove the above result by calculating $R(X_\triangle)$ and log-Futaki invariant explicitly. $R(X_\triangle)$ was calculated in \cite{Li10} based on 
Wang-Zhu's work \cite{WZ}. The main formula for log-Futaki invariant is \eqref{toricftk}.

A toric Fano
manifold $X_\triangle$ is determined by a reflexive lattice polytope
$\triangle$ (For details on toric manifolds, see \cite{Oda}). For
example, let $Bl_p\mathbb{P}^2$ denote the manifold obtained by blowing up one point on $\mathbb{P}^2$. Then $Bl_p\mathbb{P}^2$ is a toric Fano manifold and is determined by the following polytope.

Any such polytope $\triangle$ contains the origin
$O\in\mathbb{R}^n$. We denote the barycenter of $\triangle$ by
$P_c$. If $P_c\neq O$, the ray
$P_c+\mathbb{R}_{\ge0}\cdot\overrightarrow{P_c O}$ intersects the
boundary $\partial\triangle$ at point $Q$.

\begin{thm}\cite{Li10}\label{thm1}
If $P_c\neq O$,
\[
R(X_\triangle)=\frac{\left|\overline{OQ}\right|}{\left|\overline{P_cQ}\right|}
\]
Here $\left|\overline{OQ}\right|$, $\left|\overline{P_cQ}\right|$ are lengths of line
segments $\overline{OQ}$ and $\overline{P_cQ}$. In other words, 
\[
Q=-\frac{R(X_\triangle)}{1-R(X_\triangle)}P_c \in \partial\triangle
\]

If $P_c=O$, then
there is K\"{a}hler-Einstein metric on $X_\triangle$ and
$R(X_\triangle)=1$.
\end{thm}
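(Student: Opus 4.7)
The plan, following the approach of Wang-Zhu \cite{WZ} adapted to the continuity family $(*)_t$, is to reduce the problem to a real Monge-Amp\`ere equation on $\mathbb{R}^n$ using toric symmetry, derive an explicit identity pinning down the ``moment'' of any solution, and then read off the critical parameter from the geometry of $\triangle$.

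First I would restrict attention to toric-invariant solutions of $(*)_t$. Since $(*)_t$ has a unique solution when solvable and the equation is preserved by the maximal compact torus $(S^1)^n$ (once $\omega$ is chosen toric-invariant), averaging shows every solution is toric-invariant; so $R(X_\triangle) = \sup\{t : (*)_t \text{ admits a toric-invariant solution}\}$. On the open orbit $(\mathbb{C}^*)^n \subset X_\triangle$, with logarithmic coordinates $z_j = e^{x_j + \sqrt{-1}\theta_j}$, a toric K\"ahler form in $c_1(X_\triangle)$ corresponds to a smooth strictly convex function $u$ on $\mathbb{R}^n$ whose gradient is a diffeomorphism $\nabla u : \mathbb{R}^n \to \mathrm{int}(\triangle)$. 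In these variables, $(*)_t$ becomes a real Monge-Amp\`ere equation of the schematic form $\log \det(u_{ij}) = -tu - (1-t)u_0 + C$, with $u_0$ a reference potential determined by $\triangle$.

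Next I would normalize each solution $u_t$ by translating in $\mathbb{R}^n$ so that its minimum is attained at the origin, and set $P_t := \nabla u_t(0) \in \mathrm{int}(\triangle)$. Multiplying the reduced equation by a linear function of $x$ and integrating, then changing variables under the moment map $y = \nabla u_t(x)$ (which sends $\mathbb{R}^n$ diffeomorphically onto $\mathrm{int}(\triangle)$, with $\det(D^2 u_t)\,dx$ pushing forward to Lebesgue measure on $\triangle$), a direct computation should yield
\[
P_t \;=\; -\frac{t}{1-t}\,P_c.
\]
Thus $P_t$ necessarily lies on the ray from $O$ in the direction $\overrightarrow{P_cO}$, at distance $\tfrac{t}{1-t}|\overline{OP_c}|$ from the origin.

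To close the argument I would invoke Wang-Zhu's uniform $C^0$ estimate \cite{WZ}, extended to the twisted equation $(*)_t$: as long as $P_t$ stays in a compact subset of $\mathrm{int}(\triangle)$, the family $u_t$ (modulo additive constants) is uniformly bounded, higher regularity follows from standard elliptic theory, and the continuity method proceeds. Conversely, no toric-invariant solution can exist once $-\tfrac{t}{1-t}P_c$ exits $\overline{\triangle}$. Setting this point equal to $Q \in \partial\triangle$ and solving gives
\[
R(X_\triangle) \;=\; \frac{|\overline{OQ}|}{|\overline{OQ}|+|\overline{OP_c}|} \;=\; \frac{|\overline{OQ}|}{|\overline{P_cQ}|},
\]
since $O$ lies between $P_c$ and $Q$ on the ray. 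The case $P_c = O$ is the classical situation where the Futaki invariant vanishes and Wang-Zhu's existence theorem directly produces a K\"ahler-Einstein metric, so $R(X_\triangle)=1$. The hardest step should be the adaptation of Wang-Zhu's $C^0$ estimate from the soliton setting (where the first-order drift comes from a holomorphic vector field) to the present twisted setting (where it comes from the fixed reference $\omega_0$); the moment identity, while requiring careful integration by parts and bookkeeping of normalizations, is essentially algebraic once the real reduction is in place.
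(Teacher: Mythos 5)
The paper gives no proof of Theorem \ref{thm1}; it is quoted from \cite{Li10}, and your plan follows essentially the same route as that reference: reduce \ref{CMAt} by toric symmetry to the real Monge--Amp\`ere equation $\det(u_{ij})=e^{-(tu+(1-t)u_0)}$ on $\mathbb{R}^n$, extract a moment identity by integration by parts, and use Wang--Zhu type $C^0$ estimates to push the continuity method up to the critical parameter. So the strategy is right. Two points, however, need attention.

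First, your definition of $P_t$ is wrong as written: if you translate so that the minimum of $u_t$ is attained at the origin, then $\nabla u_t(0)=0$ for every $t$, so the identity $P_t=-\tfrac{t}{1-t}P_c$ cannot hold for that quantity. (Also, the reduced equation is not translation invariant, since $u_0$ is fixed, so ``normalizing by translation'' changes the equation.) The quantity that the integration by parts actually pins down is the average of $\nabla u_0$ (not of $\nabla u_t$) against the measure $\det(D^2u_t)\,dx=e^{-(tu_t+(1-t)u_0)}\,dx$: from $\int_{\mathbb{R}^n}\partial_i\bigl(e^{-(tu_t+(1-t)u_0)}\bigr)dx=0$ together with $\int_{\mathbb{R}^n}\nabla u_t\,\det(D^2u_t)\,dx=Vol(\triangle)\,P_c$ one gets
\[
\frac{1}{Vol(\triangle)}\int_{\mathbb{R}^n}\nabla u_0\,\det(D^2u_t)\,dx=-\frac{t}{1-t}\,P_c .
\]
Since the left-hand side is the barycenter of a probability measure of full support on $\mathrm{int}(\triangle)$, it lies in $\mathrm{int}(\triangle)$; this already gives the sharp necessity direction $R(X_\triangle)\le |\overline{OQ}|/|\overline{P_cQ}|$ and is stronger than your formulation ``no solution once $-\tfrac{t}{1-t}P_c$ exits $\overline{\triangle}$''. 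Correspondingly, in the a priori estimates the relevant point to track is $\nabla u_0(x_t)$ with $x_t$ the minimizer of $w_t=tu_t+(1-t)u_0$, not of $u_t$. Second, the converse direction --- that solutions persist as long as $-\tfrac{t}{1-t}P_c$ stays in a compact subset of $\mathrm{int}(\triangle)$ --- is exactly the uniform $C^0$ estimate adapted from the soliton setting; you correctly identify this as the hard step but do not supply it, and it is where the bulk of the work in \cite{Li10} lies. The case $P_c=O$ is handled as you say by Wang--Zhu's existence theorem.
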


\vspace*{-20mm}
\begin{figure}[h]
 \begin{center}\label{figure1}
\setlength{\unitlength}{1mm}
\begin{picture}(50,50)
\put(-18, 0){\line(1,0){40}} \put(0, -18){\line(0,1){40}}
\put(-12,0){\line(0,1){24}} \put(-12,0){\line(1,-1){12}}
\put(-12,24){\line(1,-1){36}} \put(0,-12){\line(1,0){24}}
\put(-6,-6){\line(1,1){7}} \put(0,0){\circle*{1}}
\put(1,1){\circle*{1}} \put(-6,-6){\circle*{1}} \put(-9,-9){$Q$}
\put(-3,1){$O$} \put(2,2){$P_c$}
\end{picture}
\end{center}
\end{figure}

\vspace*{20mm}

\textbf{Acknowledgement:} I would like to thank Professor Gang Tian for helpful discussion and constant encouragement. In particular, he suggested the log-K-stability and told me
the result in \cite{LT92} which provides the starting example of the logarithmic version of Tian-Yau-Donaldson conjecture.

\section{Log-Futaki invariant}
In this section, we recall Donaldson's definition of log-Futaki invariant \eqref{analyticftk}.
Let $(X,L)$ be a polarized projective variety and $D$ be a normal crossing divisor:
\[
D=\sum_{i=1}^r \alpha_i D_i
\]
with $\alpha_i\in (0,1)$.

From now on, we fix a Hermitian metric $\|\cdot\|_i=h_i$ and defining section $s_i$ of the line bundle $[D_i]$. 


Assume $\omega\in c_1(L)$ is a smooth K\"{a}hler form. We define
\[
\overline{\mathcal{P}}({\omega})=\left\{{\omega}_{\phi}:={\omega}+\frac{\sqrt{-1}}{2\pi}\partial\bar{\partial}\phi;\; 
\phi\in L^{\infty}(X)\cap C^{\infty}(X\backslash D) \mbox{ such that } \omega+\frac{\sqrt{-1}}{2\pi}\partial\bar{\partial}\phi\ge 0\right\}
\]

Around any point $p\in X$, we can find local coordinate $\{z_i; i=1,\cdots, n\}$, such that $D$ is defined by 
\[
D=\cup_{i=1}^{r_p}\alpha_i\{z_i=0\}
\]
where $r_p=\sharp\{i; p\in D_i\}$.

\begin{defn}
We say that $\hat{\omega}\in \overline{\mathcal{P}}(\omega)$ is a conic K\"{a}hler metric on $(X, D)$,  if around $p$,
$\omega$ is quasi-isometric to the metric
\[
\sum_{i=1}^{r_p}\frac{dz_i\wedge d\bar{z}_i}{|z_i|^{2\alpha_i}}+\sum_{j=r_p+1}^ndz_j\wedge d\bar{z}_j
\]
We will simply say that $\hat{\omega}$ is a conic metric if it's clear what $D$ is.
\end{defn}

Geometrically, this means the Riemannian metric determined by $\omega$ has conic singularity along each $D_i$ of conic angle $2\pi(1-\alpha_i)$.
 
\begin{rem}
Construction of K\"{a}hler-Einstein metrics with conic singularites was proposed long time ago by Tian, see \cite{Tconic} in which he used such metrics to prove inequalities of Chern numbers in algebraic geometry. 
\end{rem}
 
One consequence of this definition is that globally the volume form has the form
\[
\hat{\omega}^n=\frac{\Omega}{\prod_{i=1}^{r}\|s_i\|_i^{2\alpha_i}}
\]
where $\Omega$ is a smooth volume form. For any volume form $\Omega$,  let $Ric(\Omega)$ denote the curvature of the Hermitian metric on $K_X^{-1}$ determined by $\Omega$. Then, by abuse of notation,
\begin{eqnarray}\label{ricdistr}
Ric(\hat{\omega})&=&Ric(\hat{\omega}^n)=Ric(\Omega)+\frac{\sqrt{-1}}{2\pi}\sum_{i=1}^r \alpha_i\partial\bar{\partial}\log\|s_i\|_i^2=Ric(\Omega)-\sum_{i=1}^r \alpha_i c_1([D_i], h_i)+\sum_{i=1}^r
\alpha_i\{D_i\}\nonumber\\
&=&Ric(\Omega)-c_1([D], h)+\{D\}
\end{eqnarray}
where $h=\otimes_{i=1}^rh_i^{\alpha_i}$ and 
$s=\otimes_{i=1}^rs_i^{\alpha_i}$ are Hermitian metric and defining section of 
the $\mathbb{R}$-line bundle $[D]=\otimes_{i=1}^r[D_i]^{\alpha_i}$.

Here we used the Poinc\'{a}re-Lelong identity:
\[
\frac{\sqrt{-1}}{2\pi}\partial\bar{\partial}\log\|s_i\|_i^2=-c_1([D_i],h_i)+\{D_i\}
\]
where $\{D_i\}$ is the current of integration along the divisor $D_i$.

The scalar curvature of $\hat{\omega}$ on its smooth locus $X\backslash D$ is 
\[
S(\hat{\omega})=\hat{g}^{i\bar{j}}\hat{R}_{i\bar{j}}=\frac{nRic(\hat{\omega})\wedge\hat{\omega}^{n-1}}{\hat{\omega}^n}=\frac{n(Ric(\Omega)-c_1([D],h))
\wedge\hat{\omega}^{n-1}}{\hat{\omega}^n}
\]

So if $S(\hat{\omega})$ is constant, then the constant only depends on cohomological classes by the identity:
\begin{equation}\label{cncst}
n\mu_1:=\frac{n(c_1(X)-c_1([D]))\wedge [c_1(L)]^{n-1}}{c_1(L)^n}=\frac{-n(K_X+D)\cdot L^{n-1}}{L^n}=n\mu-\frac{Vol(D)}{Vol(X)}
\end{equation}

Here
\[
n\mu=\frac{n\; c_1(X)\cdot c_1(L)^{n-1}}{c_1(L)^{n}}=\frac{-nK_X\cdot L^{n-1}}{L^{n}}
\]is the average scalar curvature for smooth K\"{a}hler form in $c_1(L)$. And

\[
Vol(D)=\int_D \frac{c_1(L)^{n-1}}{(n-1)!}=\frac{D\cdot L^{n-1}}{(n-1)!},\quad Vol(X)=\int_X\frac{c_1(L)^n}{n!}=\frac{L^n}{n!}
\]

Now assume $\mathbb{C}^*$ acts on $(X, L)$ and $v$ is the generating holomorphic vector field.
Recall that the ordinary Futaki-Calabi invariant (\cite{Fut}, \cite{Cal}) is defined by
\[
F(c_1(L))(v)=-\int_X\theta_v(S(\omega)-n\mu)\frac{\omega^n}{n!}
\]
where $\theta_v$ satisfies
\[
\iota_v\omega=\bar{\partial}\theta_v
\]

Now assume ${\hat{\omega}_\infty}\in\overline{\mathcal{P}}(\omega)$ is a conic metric and satisfies

\begin{equation}
S({\hat{\omega}_\infty})=n\mu_1
\end{equation}

Assume $D$ is preserved by the $\mathbb{C}^*$ action. Let's calculate the ordinary Futaki invariant using the conic metric ${\hat{\omega}_\infty}$. 
Let $\hat{\theta}_v=\hat{\theta}({\hat{\omega}_\infty}, v)$. Then near $p\in D$, $v\sim \sum_{i=1}^{r_p}c_iz_i\partial_{z_i}+\tilde{v}$ with $\tilde{v}=o(z_1\cdots z_{r_p})$ holomorphic. 
$\hat{\theta}_v\sim \sum_{i=1}^{r_p}|z_i|^{2(1-\alpha_i)}$. 

We then make use of the distributional identity \eqref{ricdistr} to get
\begin{eqnarray*}
F(c_1(L))(v)&=&-\int_{X}\hat{\theta}_v(n Ric({\hat{\omega}_\infty})-n\mu{\hat{\omega}_\infty})\wedge\frac{{\hat{\omega}_\infty}^{n-1}}{n!}\\
&=&
-\int_{X}\hat{\theta}_v\left[(nRic(\Omega)-nc_1([D],h)-n\mu_1{\hat{\omega}_\infty})+
n\{D\}-(n\mu-n\mu_1){\hat{\omega}_\infty}\right]\wedge\frac{{\hat{\omega}_\infty}^{n-1}}{n!}\\
&=&-\int_X\hat{\theta}_v(S({\hat{\omega}_\infty})-n\mu_1)\frac{{\hat{\omega}_\infty}^n}{n!}-\int_X\{D\}\hat{\theta}_v\frac{{\hat{\omega}_\infty}^{n-1}}{(n-1)!}+(n\mu-n\mu_1)\int_X\hat{\theta}_v\frac{{\hat{\omega}_\infty}^n}{n!}\\
&=&-\left(\int_D\hat{\theta}_v\frac{{\hat{\omega}_\infty}^{n-1}}{(n-1)!}-\frac{Vol(D)}{Vol(X)}\int_X\hat{\theta}_v\frac{{\hat{\omega}_\infty}^n}{n!}\right)
\end{eqnarray*}

So we get

\[
0=F(c_1(L))(v)+\left(\int_D\hat{\theta}_v\frac{{\hat{\omega}_\infty}^{n-1}}{(n-1)!}-\frac{Vol(D)}{Vol(X)}\int_X\hat{\theta}_v\frac{{\hat{\omega}_\infty}^n}{n!}\right)
\]

Since the two integrals in the above formula is integration of (singular) equivariant forms, they are independent of the chosen K\"{a}hler metric in
$\overline{\mathcal{P}}(\omega)$ with at worst conic singularities. In particular, we can choose the smooth K\"{a}hler metric $\omega$, then we just discover the log-Futaki invariant defined by Donaldson:

\begin{defn}\cite{Dol11}
\begin{equation}\label{analyticftk}
F(c_1(L), D)(v)=F(c_1(L))(v)+\left(\int_D\theta_v\frac{\omega^{n-1}}{(n-1)!}-\frac{Vol(D)}{Vol(X)}\int_X\theta_v\frac{\omega^n}{n!}\right)
\end{equation}
\end{defn}

\begin{rem}
This differs from the formula in \cite{Dol11} by a sign. And we think of $D$ as a cycle with real coefficients, so if we replace $D$ by $(1-\beta)\triangle$, we have the same formua as that in \cite{Dol11}.
\end{rem}

\section{log-K-energy and Berman's formulation}

We can integrate the log-Futaki-invariant to get log-K-energy 
\begin{eqnarray}\label{lgkeng}
\nu_{{\omega}, D}(\phi)&=&-\int_0^1 dt\int_X(S({\omega}_t)-\underline{S})\dot{\phi}\frac{{\omega}_t^n}{n!}+\int_0^1dt\int_D \dot{\phi}\frac{{\omega}_t^{n-1}}{(n-1)!}-
\frac{Vol(D)}{Vol(X)}\int_0^1dt\int_X\dot{\phi}\frac{{\omega}_t^n}{n!}\nonumber\\
&=&\nu_{{\omega}}(\phi)+\int_0^1\int_X\left(\frac{\sqrt{-1}}{2\pi}\partial\bar{\partial}\log\|s\|^2+c_1([D], h)\right) \dot{\phi}\frac{{\omega}_t^{n-1}}{(n-1)!}+\frac{Vol(D)}{Vol(X)}F^0_{{\omega}}(\phi)\nonumber\\
&=&\nu_{{\omega}}(\phi)+\frac{Vol(D)}{Vol(X)}
F^0_{{\omega}}(\phi)+\mathcal{J}^{\chi_D}_{{\omega}}(\phi)+\int_X\log\|s_D\|^2 \frac{{{\omega}}_\phi^n}{n!}
\end{eqnarray}
where $\chi_D=c_1([D], h)$ is the Chern curvature form. The functionals $F^0_{{\omega}}(\phi)$ and $\mathcal{J}^\chi_{{\omega}}(\phi)$ are defined by:
\[
F^0_{{\omega}}(\phi)=-\int_0^1dt\int_X\dot{\phi}\frac{{\omega}_{\phi_t}^n}{n!}
\]

\[
\mathcal{J}^\chi_{{\omega}}(\phi)=\int_0^1 dt\int_X \dot{\phi}\chi\wedge\frac{{\omega}_{\phi_t}^{n-1}}{(n-1)!}
\]

Let's now focus on the Fano case as in the beginning of this paper.  \eqref{cnkeq} is equivalent to the following singular complex Monge-Amp\`{e}re equation:
\begin{equation}\label{sgma}
({\omega}+\partial\bar{\partial}\phi)^n=e^{-\beta\phi}\frac{\Omega_1}{\|s\|^{2(1-\beta)}}
\end{equation}
with $\Omega_1=e^{h_{{\omega}}}{\omega}^n$ and $s$ is a defining section of $[Y]$. Note that the line bundle $[Y]=K_X^{-1}$ has the Hermitian metric $\|\cdot\|$ such that the curvature is $\omega$. 

We have $D=(1-\beta)Y$. Since $[Y]=K_X^{-1}$, we can assume $\chi_D=(1-\beta){\omega}$, $Vol((1-\beta) D)=n(1-\beta) Vol(X)$.  Then \eqref{lgkeng} becomes
\begin{eqnarray*}
\nu_{{\omega}, D}&=&\nu_{{\omega}}(\phi)+(1-\beta)\left(n F_{{\omega}}^0(\phi)+ \mathcal{J}^{{\omega}}_{{\omega}}(\phi)\right)+(1-\beta)\int_X\log\|s\|^2\frac{{\omega}_\phi^n}{n!}\\
&=&\nu_{{\omega}}(\phi)+(1-\beta)(I_{{\omega}}-J_{{\omega}})+(1-\beta)\int_X\log\|s\|^2\frac{{\omega}_\phi^n}{n!}\\
&=&\int_X\log\frac{{\omega}_\phi^n}{\Omega_1}-\beta(I_{{\omega}}-J_{{\omega}})+(1-\beta)\int_X\log\|s\|^2\frac{{\omega}_\phi^n}{n!}
\end{eqnarray*}

We have used the well known formula for K-energy \cite{T}:
\[
\nu_{{\omega}}(\phi)=\int_X\log\frac{{\omega}_\phi^n}{\Omega_1}-(I_{{\omega}}-J_{{\omega}})(\phi)
\]
where
\[
I_{{\omega}}(\phi)=\int_X\phi({\omega}^n-{\omega}_\phi^n)/n!
\]
\[
J_{{\omega}}(\phi)=F_{{\omega}}^0(\phi)+\int_X\phi\frac{{\omega}^n}{n!}
\]
And it's easy to verify that
\[
n F_{{\omega}}^0(\phi)+\mathcal{J}^{{\omega}}_{{\omega}}(\phi)=(I_{{\omega}}-J_{{\omega}})(\phi)=-\left(\int_X\phi{\omega}_\phi^n+F^0_{{\omega}}(\phi)\right)
\]
From above formula, we  see that, in Fano case, the log-K-energy coincides with Berman's free energy associated with \eqref{sgma} (\cite{Ber})
\[
\nu_{\omega, D}=\int_X\log\frac{{\omega}_\phi^n}{\Omega_1/\|s\|^{2(1-\beta)}}\frac{{\omega}_\phi^n}{n!}+\beta\left(\int_X\phi{\omega}_\phi^n+F_{{\omega}}^0(\phi)\right)
\]

\section{Log-K-stability}
We imitate the definition of K-stability to define log-K-stability. First we recall the definition of test configuration \cite{Dol02} or special degeneration \cite{T97} of a polarized projective variety $(X, L)$. 

\begin{defn}
A test configuration of $(X, L)$,  consists of
\begin{enumerate}
\item a scheme $\mathcal{X}$ with a $\mathbb{C}^*$-action;
\item a $\mathbb{C}^*$-equivariant line bundle $\mathcal{L}\rightarrow\mathcal{X}$
\item a flat $\mathbb{C}^*$-equivariant map $\pi: \mathcal{X}\rightarrow\mathcal{C}$, where $\mathbb{C}^*$ acts on $\mathbb{C}$ by
multiplication in the standard way;
\end{enumerate}
such that any fibre $X_t=\pi^{-1}(t)$ for $t\neq 0$ is isomorphic to $X$ and $(X,L)$ is isomorphic to $(X_t,\mathcal{L}|_{X_t})$.
\end{defn}

Any test configuration can be equivariantly embedded into $\mathbb{P}^N\times\mathbb{C}^*$ where the $\mathbb{C}^*$ action on $\mathbb{P}^N$ is given by a 1 parameter subgroup of $SL(N+1,\mathbb{C})$. If $Y$ is any subvariety of $X$, the test configuration of $(X, L)$ also induces a
test configuration $(\mathcal{Y}, \mathcal{L}|_{\mathcal{Y}})$ of $(Y, L|_Y)$ .

Let $d_k$, $\tilde{d}_k$ be the dimensions of $H^0(X, L^{k})$, $H^0(Y, L|_Y^{\;k})$, and $w_k$, $\tilde{w}_k$ be the weights of $\mathbb{C}^*$ action on $H^0(X_0, \mathcal{L}|_{X_0}^{\;k})$, $H^0(Y_0, \mathcal{L}|_{Y_0}^{\;k})$, respectively. Then we have expansions:
\[
w_k=a_0k^{n+1}+a_1k^n+O(k^{n-1}),\quad
d_k=b_0k^n+b_1k^{n-1}+O(k^{n-2})
\]

\[
\tilde{w}_k=\tilde{a}_0k^n+O(k^{n-1}), \quad
\tilde{d}_k=\tilde{b}_0k^{n-1}+O(k^{n-2})
\]

If the central fibre $X_0$ is smooth, we can use equivariant differential forms to calculate the coefficients by \cite{Dol02}. Let ${\omega}$ be a smooth K\"{a}hler form in $c_1(L)$, and $\theta_v=\mathcal{L}_v-\nabla_v$,  then
\begin{equation}\label{coef1}
a_0=-\int_X\theta_v\frac{{\omega}^n}{n!};\; a_1=-\frac{1}{2}\int_X\theta_v S({\omega})\frac{{\omega}^n}{n!}
\end{equation}
\begin{equation}\label{coef2}
b_0=\int_X\frac{{\omega^n}}{n!}=Vol(X);\; b_1=\frac{1}{2}\int_X S({\omega})\frac{{\omega}^n}{n!}
\end{equation}
\begin{equation}\label{coef3}
\tilde{a}_0=-\int_{Y_0}\theta_v\frac{{\omega}^{n-1}}{(n-1)!};\; \tilde{b}_0=\int_{Y_0}\frac{{\omega}^{n-1}}{(n-1)!}=Vol(Y_0)
\end{equation}
\begin{rem}
To see the signs of coefficients and give an example, we consider the case where $X=\mathbb{P}^1$, $L=\mathcal{O}_{\mathbb{P}^1}(k)$. $\mathbb{C}^*$ acts on 
$\mathbb{P}^1$ by multiplication: $t\cdot z=tz$. A general $D\in |L|$ consists of $k$ points. As $t\rightarrow 0$, $t\cdot D\rightarrow k\{0\}$. $D$ is
the zero set of a general degree $k$ homogeneous polynomial $P_k(z_0, z_1)$ and $k\{0\}$ is the zero set of $z_1^k$. $\mathbb{C}^*$ acts
on $H^0(\mathbb{P}^1, \mathcal{O}(k))$ by $t\cdot z_0^iz_1^j=t^{-j}z_0^iz_1^j$ so that $\lim_{t\rightarrow 0}[t\cdot P_k(z_0, z_1)]=[z_1^k]$, where 
$[P_k]\in \mathbb{P}(H^0(\mathbb{P}^1, \mathcal{O}(k)))$. Take the Fubini-Study metric $\omega_{FS}=\frac{\sqrt{-1}}{2\pi}\partial\bar{\partial}\log(1+|z|^2)=\frac{\sqrt{-1}}{2\pi}\frac{dz\wedge d\bar{z}}{(1+|z|^2)^2}$, then $\theta_v=\frac{\partial\log(1+|z|^2)}{\partial \log |z|^2}=\frac{|z|^2}{1+|z|^2}$. So
\[
-a_0=\int_{\mathbb{P}^1}\theta_v\omega_{FS}=\int_0^{+\infty}\frac{r^2}{(1+r^2)^3}2rdr=\frac{1}{2}
\]
\[
-a_1=\frac{1}{2}\int_{\mathbb{P}^1}S(\omega_{FS})\theta_v\omega_{FS}=\int_{\mathbb{P}^1}\theta_v\omega_{FS}=\frac{1}{2}
\]
While
\[
w_k=-(1+\cdots+k)=-\frac{1}{2}k^2-\frac{1}{2}k
\]
which gives exactly $a_0=a_1=-\frac{1}{2}$.
\end{rem}

Comparing \eqref{analyticftk}, \eqref{coef1}-\eqref{coef3}, we can define the algebraic log-Futaki invariant of the given test configuration to be
\begin{eqnarray}\label{logftk}
F(\mathcal{X}, \mathcal{Y}, \mathcal{L})&=&\frac{2(a_1b_0-a_0b_1)}{b_0}+(-\tilde{a}_0+\frac{\tilde{b}_0}{b_0}a_0)\nonumber\\
&=&\frac{(2a_1-\tilde{a}_0)b_0-a_0(2b_1-\tilde{b}_0)}{b_0}
\end{eqnarray}

\begin{defn}\label{dflgkstb}
$(X,Y, L)$ is log-K-stable along the test configuration $(\mathcal{X}, \mathcal{L})$ if $F(\mathcal{X}, \mathcal{Y}, \mathcal{L})\le0$, and equality holds
if and only if $(\mathcal{X}, \mathcal{Y}, \mathcal{L})$ is a product configuration. 

$(X, Y, L)$ is semi-log-K-stable along $(\mathcal{X}, \mathcal{L})$ if $F(\mathcal{X}, \mathcal{Y}, \mathcal{L})\le0$. Otherwise, it's unstable.

$(X, Y, L)$ is log-K-stable (semi-log-K-stable) if, for any integer $r>0$, $(X, Y, L^r)$ is log-K-stable (semi-log-K-stable) along any test configuration of $(X, Y, L^r)$. 

\end{defn}

\begin{rem}
When $Y$ is empty, then definition of log-K-stability becomes the definition of K-stability. (\cite{T97}, \cite{Dol02})
\end{rem}

\begin{rem}\label{infsgn}
In applications, we sometimes meet the following situation. Let $\lambda(t): \mathbb{C}^*\rightarrow SL(N+1, \mathbb{C})$ be a 1 parameter subgroup. As 
$t\rightarrow \infty$, $\lambda(t)$ will move $X, Y\subset\mathbb{P}^N$ to the limit scheme $X_0$, $Y_0$. Then stability condition is equivalent to the other opposite sign condition $F(X_0, Y_0, v)\ge 0$. This is of course related to the above definition by transformation $t\rightarrow t^{-1}$. 
\end{rem}

\begin{exmp}[Orbifold]
Assume $X$ is smooth. $Y=\sum_{i=1}^r(1-\frac{1}{n_i})D_i$ is a normal crossing divisor, where $n_i>0$ are integers. The conic K\"{a}hler metric on
$(X, Y)$ is just the orbifold K\"{a}hler metric on the orbifold $(X, Y)$. Orbifold behaves similarly as smooth variety,  but in the calculation, we need to use orbifold canonical bundle $K_{orb}=K_X+Y$. For example, think $L$ as an orbifold line bundle on $X$, then the orbifold Riemann-Roch says that
\begin{eqnarray*}
dim H^0_{orb}((X, Y), L)&=&\frac{L^n}{n!}k^n+\frac{1}{2}\frac{-(K_X+Y)\cdot L^n}{(n-1)!}k^{n-1}+O(k^{n-2})\\
&=&b_0k^n+\frac{1}{2}(2b_1-\tilde{b}_0)k^{n-1}+O(k^{n-2})
\end{eqnarray*}
For the $\mathbb{C}^*$-weight of $H^0_{orb}((X, Y), L)$, we have expansion:
\begin{equation*}
w^{orb}_k=a_0^{orb}k^{n+1}+a_1^{orb}k^{n}+O(k^{n-1})
\end{equation*}
By orbifold equivariant Riemann-Roch, we have the formula:
\[
a_0^{orb}=\int_{X}\hat{\theta}_v\frac{\hat{\omega}^n}{n!}=\int_X\theta_v\frac{\omega^n}{n!}=a_0
\]
\[
a_1^{orb}=\int_{X}\hat{\theta}_v S(\hat{\omega})\frac{\hat{\omega}^n}{n!}
\]
To calculate the second coefficient $a_1^{orb}$, we choose an orbifold metric $\hat{\omega}$, then by \eqref{coef1}:
\begin{eqnarray*}
a_1&=&-\frac{1}{2}\int_X\hat{\theta}_v n\; Ric(\hat{\omega})\wedge\frac{\hat{\omega}^{n-1}}{n!}\\
&=&-\frac{1}{2}\int_X \hat{\theta}_v n(Ric(\Omega)-c_1([D], h)+\{D\})\wedge \frac{\hat{\omega}^{n-1}}{n!}\\
&=&-\frac{1}{2}\int_X \hat{\theta}_vS(\hat{\omega})\frac{\hat{\omega}^n}{n!}-\frac{1}{2}\int_D \hat{\theta}_v\frac{\hat{\omega}^{n-1}}{(n-1)!}\\
&=&a_1^{orb}-\frac{1}{2}\int_D\theta_v\frac{\omega^{n-1}}{(n-1!}=a_1^{orb}+\frac{1}{2}\tilde{a}_0
\end{eqnarray*}
So
\begin{equation}
a_1^{orb}=\frac{1}{2}(2a_1-\tilde{a}_0)
\end{equation}
Comparing \eqref{logftk}, we see that the log-Futaki invariant recovers the orbifold Futaki invariant, and similarly log-K-stability recovers orbifold K-stability. Orbifold
Futaki and orbifold K-stability were studied by Ross-Thomas \cite{RT09}. 

\end{exmp}

\begin{exmp}
$X=\mathbb{P}^1$, $L=K_{\mathbb{P}^1}^{-1}=\mathcal{O}_{\mathbb{P}^1}(2)$, $Y=\sum_{i=1}^{r}\alpha_i p_i$.  For any $i\in\{1,\cdots, r\}$, we choose the coordinate $z$ on $\mathbb{P}^1$, such
that $z(p_i)=0$. Then consider the holomorphic vector field $v=z\partial_{z}$. $v$ generates the 1 parameter subgroup $\lambda(t)$ :
$\lambda(t)\cdot z=t\cdot z$. As $t\rightarrow \infty$, $\lambda(t)$ degenerate $(X, Y)$ into the pair $(\mathbb{P}^1, \alpha_i \{0\}+\sum_{j\neq i}\alpha_j
\{\infty\})$.  We take $\theta_v=\frac{-|z|^{-2}+|z|^2}{|z|^{-2}+1+|z|^2}$. Then it's easy to get the log-Futaki invariant of the degeneration determined by $\lambda$:
\[
F(\mathbb{P}^1, \sum_{i=1}^r\alpha_ip_i, \mathcal{O}_{\mathbb{P}^1}(2))(\lambda)=\sum_{j\neq i}\alpha_j-\alpha_i
\]
If $(\mathbb{P}^1, \sum_{i=1}^r\alpha_ip_i)$ is log-K-stable, by Remark \ref{infsgn}, we have
\begin{equation}\label{relalp}
\sum_{j\neq i}\alpha_j-\alpha_i>0
\end{equation}
Equivalently, if we let $t\rightarrow 0$, we get $\alpha_i-\sum_{j\neq i}\alpha_j<0$ from log-K-stability.

Let's consider the problem of constructing singular Riemannian metric $g$ of constant scalar curvature on $\mathbb{P}^1$ which has conic angle $2\pi(1-\alpha_i)$ at $p_i$ and is smooth elsewhere. Assume $p_i\neq\infty$ for any $i=1,\dots, r$. Under conformal coordinate $z$ of $\mathbb{C}\subset\mathbb{P}^1$, $g=e^{2u}|dz|^2$. $u$ is a smooth function in the punctured complex plane $\mathbb{C}-\{p_1,\dots, p_r\}$ so that near each $p_i$, $u(z)=-2\alpha_i\log|z-p_i|+$a continuous function, where $\alpha_i\in (0,1)$ and $u=-2\log|z|+$ a continuous function near infinity. We call such function is of conic type. The condition of constant scalar curvature
corresponds to the following Liouville equations.
\begin{enumerate}
\item
$
\Delta u=-e^{2u}
$
\item
$
\Delta u=0
$
\item
$
\Delta u=e^{2u}
$
\end{enumerate}
which correspond to scalar curvature=1, 0, -1 case respectively.

For such equations, we have the following nice theorem due to Troyanov, McOwen, Thurston, Luo-Tian.
\begin{thm}[See \cite{LT92} and the reference there]
\begin{enumerate}
\item 
For equation 1, it has a solution of conic type if and only if
\begin{enumerate}
\item[(a)] $\sum_{i=1}^r\alpha_i<2$, and
\item[(b)] $\sum_{j\neq i}\alpha_j-\alpha_i>0$, for all $i=1, \dots, n$. 
\end{enumerate}
\item
For equation 2,  it has a solution of conic type if and only if  (a): $\sum_{i=1}^r\alpha_i=2$. 

In this case, (a) implies the condition: (b) $\sum_{j\neq i}\alpha_j-\alpha_i>0$, for all $i=1, \dots, r$. 

\item
For equation 3,  it has a solution of conic type if and only if  (a): $\sum_{i=1}^r\alpha_i>2$. 

Again in this case, (a) implies the condition: (b) $\sum_{j\neq i}\alpha_j-\alpha_i>0$, for all $i=1, \dots, r$. 

\end{enumerate}
Moreover, the above solutions are all unique.
\end{thm}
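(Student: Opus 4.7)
Necessity of both conditions is a classical geometric obstruction. If $g$ is a conic metric of constant Gauss curvature $K\in\{1,0,-1\}$ with cone angles $2\pi(1-\alpha_i)$ at $p_i$, the generalized Gauss--Bonnet formula gives
\[
\int_{\mathbb{P}^1} K\,dA_g \;=\; 2\pi\chi(\mathbb{P}^1)-2\pi\sum_{i=1}^r\alpha_i \;=\; 2\pi\Bigl(2-\sum_{i=1}^r\alpha_i\Bigr),
\]
so the sign of $K$ forces the stated sign of $2-\sum\alpha_i$, which is (a). For (b) in case 1, I would excise a small geodesic disk of radius $\varepsilon$ around $p_i$ and apply Gauss--Bonnet with the geodesic-curvature boundary term on the complement; letting $\varepsilon\to 0$ and using $K\le 1$ on the complement forces $\sum_{j\neq i}\alpha_j-\alpha_i>0$, which is Troyanov's local stability inequality. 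In cases 2 and 3 the assumptions $\alpha_i\in(0,1)$ and $\sum\alpha_i\ge 2$ give $\sum_{j\neq i}\alpha_j\ge 2-\alpha_i>1>\alpha_i$, so (b) is automatic.

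For existence, fix a smooth background $g_0$ on $\mathbb{P}^1$ and build a conic reference $g_{\mathrm{ref}}=e^{2u_0}g_0$ using any $u_0$ behaving as $-\alpha_i\log|z-p_i|^2$ near $p_i$ (with the matching correction at infinity). Writing the unknown as $g=e^{2u}g_{\mathrm{ref}}$ with $u$ bounded, the three Liouville equations become
\[
-\Delta_{g_{\mathrm{ref}}}u + K_0 = K\,e^{2u},\qquad K\in\{1,0,-1\},
\]
where $K_0$ is the Gauss curvature of $g_{\mathrm{ref}}$. Case 2 is handled by direct construction: the meromorphic 1-form $\eta=\prod_i(z-p_i)^{-\alpha_i}\,dz$ extends across infinity precisely when $\sum\alpha_i=2$, and $|\eta|^2$ is the desired flat conic metric. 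Case 3 reduces to minimizing the strictly convex, weakly lower semicontinuous, coercive functional
\[
J_{-}(u)=\tfrac12\int|\nabla u|_{g_{\mathrm{ref}}}^2\,dA_{g_{\mathrm{ref}}}+\int K_0\,u\,dA_{g_{\mathrm{ref}}}+\tfrac12\int e^{2u}\,dA_{g_{\mathrm{ref}}},
\]
and existence and uniqueness both drop out of convexity.

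The main obstacle is case 1, where the associated functional $J_{+}(u)=\tfrac12\int|\nabla u|^2+\int K_0 u-\tfrac12\log\int e^{2u}$ is of Moser--Trudinger type and not a priori coercive. My plan is to establish a singular Moser--Trudinger inequality on $(\mathbb{P}^1,g_{\mathrm{ref}})$ whose sharp exponential constant is $4\pi\min_i\bigl(\sum_{j\neq i}\alpha_j-\alpha_i\bigr)$; condition (b) is precisely what makes this constant strictly positive, and so makes $J_{+}$ coercive on the zero-mean slice. A concentration--compactness analysis then rules out blow-up of a minimizing sequence at any cone point (using (b)) or any smooth point (using (a)), producing a minimizer, which by elliptic regularity adapted to the conic setting is a genuine solution with the prescribed conic behaviour on $\mathbb{P}^1\setminus\{p_1,\dots,p_r\}$. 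Uniqueness in case 1 is the subtlest point and would be obtained by a Bandle-type rearrangement based on Bol's isoperimetric inequality for surfaces with $K\le 1$; the sharp singular Moser--Trudinger estimate with constant pinned by (b) is the decisive ingredient throughout, and its failure once (b) is violated is exactly why the classical two-cone-point football only admits a smooth constant-curvature representative when $\alpha_1=\alpha_2$.
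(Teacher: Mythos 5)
The first thing to note is that the paper does not prove this theorem: it is imported verbatim from Luo--Tian \cite{LT92} and the references there (Troyanov, McOwen, Thurston), so your proposal has to be judged against the literature rather than against an argument in the text.

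The concrete gap is your necessity argument for condition (b) in the spherical case. Excising a small disk $D_\varepsilon$ about $p_i$ and applying Gauss--Bonnet to the complement gives
\begin{equation*}
\int_{S^2\setminus D_\varepsilon} K\,dA \;=\; 2\pi\Bigl(1-\sum_{j\neq i}\alpha_j\Bigr)-\oint_{\partial D_\varepsilon}k_g\,ds ,
\end{equation*}
and since the total geodesic curvature of a small circle about a cone point of angle $2\pi(1-\alpha_i)$ tends to $2\pi(1-\alpha_i)$ (with reversed sign as boundary of the complement), letting $\varepsilon\to0$ just returns $\int K\,dA=2\pi(2-\sum_j\alpha_j)$, i.e.\ the global identity and hence only (a). No choice of excised region produces an inequality that singles out $\alpha_i$, and indeed no such soft local argument can work: the spherical football (two cone points of equal angle $2\pi(1-\alpha)$, e.g.\ $4(1-\alpha)^2|z|^{-2\alpha}(1+|z|^{2(1-\alpha)})^{-2}|dz|^2$ up to normalization) is a genuine constant-positive-curvature conic metric for which (b) fails, so the ``only if'' in case 1 is implicitly a statement for $r\ge 3$ and its proof must see more than curvature bounds. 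The actual necessity proof is one of the substantive points of Troyanov and Luo--Tian and goes through the developing map and holonomy of the spherical structure (equivalently, Luo--Tian's correspondence with convex spherical polytopes), or a Pohozaev-type identity centered at $p_i$.

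For the rest: cases 2 and 3 are fine in outline (the explicit flat metric $\prod_i|z-p_i|^{-2\alpha_i}|dz|^2$ and the convex coercive functional are the standard McOwen/Troyanov arguments, modulo checking integrability of the singular curvature term and noting that flat-case uniqueness is only up to scale). But for case 1 both the existence (singular Moser--Trudinger plus concentration--compactness) and the uniqueness (Bol/Alexandrov rearrangement) are announced as plans rather than carried out, and these are precisely the hard content of the cited theorem; moreover the coercivity threshold is governed by the Troyanov/Chen constant involving $1-\max_i\alpha_i$, with (b) entering as the subcriticality condition $2-\sum_i\alpha_i<2-2\max_i\alpha_i$, rather than by the constant $4\pi\min_i(\sum_{j\neq i}\alpha_j-\alpha_i)$ as written.
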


Note that $\deg (-(K_{\mathbb{P}^1}+\sum_{i=1}^{r}\alpha_ip_i))=2-\sum_{i=1}^r\alpha_i$, so by \eqref{cncst}, conditions (a) in above theorem correspond to the cohomological conditions for the scalar curvature to be positive, zero, negative respectively. 
While the condition (b) is the same as \eqref{relalp}. So by the above theorem, if $(\mathbb{P}^1, \sum_{i=1}^r\alpha_ip_i)$ is log-K-stable, then there is 
a conic metric on $(\mathbb{P}^1, \sum_{i=1}^r\alpha_ip_i)$ with constant curvature whose sign is the same as that of $2-\sum_i\alpha_i$.
\end{exmp}

This example clearly suggests 
\begin{conj}[Logarithmic version of Tian-Yau-Donaldson conjecture]\label{logTYD}
There is a constant scalar curvature conic K\"{a}hler metric on $(X, Y)$ if and only if $(X, Y)$ is log-K-stable. 
\end{conj}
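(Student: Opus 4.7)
\textbf{Proof proposal for Conjecture \ref{logTYD}.}

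The conjecture is a logarithmic analog of the classical Tian-Yau-Donaldson conjecture, so my plan is to adapt the established framework from the smooth csck setting to the conic setting developed in this paper, splitting the equivalence into the two usual directions.

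For the easy direction (existence $\Rightarrow$ log-K-stability), I would follow the Phong-Ross-Sturm paradigm applied to the log-K-energy $\nu_{\omega, D}$ from Section 3. If $\hat{\omega}_\infty$ is a csck conic Kähler metric, then it is a critical point of $\nu_{\omega,D}$; combined with convexity of $\nu_{\omega,D}$ along $C^{1,1}$ geodesics in $\overline{\mathcal{P}}(\omega)$ (a statement that itself requires adapting Chen's work and the Berman-Berndtsson theory to the conic setting, using the Berman free-energy reformulation at the end of Section 3), this forces $\nu_{\omega,D}$ to be bounded below. Given a test configuration $(\mathcal{X}, \mathcal{Y}, \mathcal{L})$, I would build an associated weak geodesic ray of conic potentials and show that the asymptotic slope of $\nu_{\omega,D}$ along this ray equals the algebraic invariant $F(\mathcal{X}, \mathcal{Y}, \mathcal{L})$ of \eqref{logftk}. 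This slope computation follows equivariant Riemann-Roch as in \eqref{coef1}--\eqref{coef3}, with the extra divisor contribution already isolated in the orbifold example. Boundedness below then yields $F(\mathcal{X}, \mathcal{Y}, \mathcal{L}) \ge 0$; the equality case (forcing product configurations) would be handled by a Chen-Tian / Berman-type rigidity argument on weak geodesics.

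For the hard direction (log-K-stability $\Rightarrow$ existence) I see two complementary strategies. The first is a continuity method: in the Fano case, vary the cone angle $\beta$ and solve \eqref{sgma}, starting from $\beta$ small (where Aubin-Yau-type existence is available) and extending up to the desired value, as suggested by Conjecture \ref{conj1}. Openness would come from an implicit-function-theorem argument in Donaldson's conic Hölder spaces; closedness needs a $C^0$ estimate (either a logarithmic $\alpha$-invariant bound or properness of $\nu_{\omega,D}$) together with higher-order conic estimates. The second is variational: show that log-K-stability implies properness of $\nu_{\omega,D}$ on $\overline{\mathcal{P}}(\omega)$ modulo $\mathrm{Aut}(X,Y)$, then extract a minimizer via Berman's variational approach applied to the free energy at the end of Section 3.

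The main obstacle, and the reason the statement is only a conjecture, is twofold. Analytically, one needs a robust linear and nonlinear theory for the conic Monge-Ampère equation \eqref{sgma}: Schauder and Evans-Krylov-type estimates adapted to the model metric $\sum |z_i|^{-2\alpha_i} dz_i \wedge d\bar z_i + \sum dz_j \wedge d\bar z_j$, together with uniform $C^0$ control along the continuity path. Algebraically, one must bridge the gap between log-K-stability (tested only against algebraic test configurations, as in Definition \ref{dflgkstb}) and the strictly stronger analytic statement of properness of $\nu_{\omega,D}$; this is the same sticking point as in the classical TYD conjecture and is likely to require either a suitable notion of filtration or geodesic log-K-stability, or a direct perturbation argument in the spirit of Tian's $\alpha$-invariant.
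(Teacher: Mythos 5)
The statement you are addressing is a \emph{conjecture}, and the paper supplies no proof of it: Conjecture \ref{logTYD} is stated as the logarithmic analogue of the Tian--Yau--Donaldson conjecture, and the only support the paper offers is (i) the $\mathbb{P}^1$ example, where the Troyanov--McOwen--Luo--Tian theorem on the Liouville equation shows that the numerical conditions for existence of a constant-curvature conic metric on $(\mathbb{P}^1,\sum_i\alpha_i p_i)$ coincide with the log-K-stability inequalities \eqref{relalp}, and (ii) the toric Fano computation of Theorem \ref{main}, which checks consistency with Conjecture \ref{conj1}. So there is no ``paper proof'' to compare your argument against, and your proposal should be judged on its own terms.

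On those terms, what you have written is a research program, not a proof, and you say so yourself. Every load-bearing step is deferred: convexity of $\nu_{\omega,D}$ along weak geodesics of conic potentials, the identification of the asymptotic slope of $\nu_{\omega,D}$ along a ray attached to a test configuration with the algebraic invariant \eqref{logftk}, the openness and closedness of the continuity path in $\beta$ for \eqref{sgma} (which requires a Schauder and Evans--Krylov theory for the conic model metric that the paper does not develop), and above all the passage from the algebraic condition of Definition \ref{dflgkstb} to properness of the log-K-energy. The last of these is the same gap that keeps the classical TYD conjecture open in the csck case, and nothing in the paper --- the log-Futaki formula \eqref{analyticftk}, the log-K-energy \eqref{lgkeng}, or the toric formula \eqref{toricftk} --- gives you a way across it. One small caution on your ``easy direction'': because of the sign convention in Definition \ref{dflgkstb} and Remark \ref{infsgn}, the stability inequality in this paper is $F(\mathcal{X},\mathcal{Y},\mathcal{L})\le 0$, so the conclusion you should be deriving from boundedness below of $\nu_{\omega,D}$ must be matched to that convention rather than to the $F\ge 0$ you wrote. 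In summary: your outline correctly identifies the standard strategy and the genuine obstructions, but it does not constitute a proof, and the paper does not claim one either.
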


\section{Toric Fano case}

\subsection{Log-Futaki invariant for 1psg on toric Fano variety}

For a reflexive
lattice polytope $\triangle$ in
$\mathbb{R}^n=\Lambda\otimes_\mathbb{Z}\mathbb{R}$, we have a Fano
toric manifold $(\mathbb{C}^*)^n\subset X_\triangle$ with a
$(\mathbb{C}^*)^n$ action. In the following, we will sometimes just write $X$ for $X_\triangle$ for simplicity.

Let $(S^1)^n\subset(\mathbb{C}^*)^n$ be the standard real maximal
torus. Let $\{z_i\}$ be the standard coordinates of the dense orbit
$(\mathbb{C}^*)^n$, and $x_i=\log|z_i|^2$. We have
\begin{lem}
Any $(S^1)^n$ invariant K\"{a}hler metric ${{\omega}}$ on $X$ has a
potential $u=u(x)$ on $(\mathbb{C}^*)^n$, i.e.
${{\omega}}=\frac{\sqrt{-1}}{2\pi}\partial\bar{\partial}u$. $u$ is a
proper convex function on $\mathbb{R}^n$, and satisfies the momentum
map condition:
\[
Du(\mathbb{R}^n)=\triangle
\]
Also,
\begin{equation}\label{toricvol}
\frac{(\partial\bar{\partial}u)^n/n!}{\frac{dz_1}{z_1}\wedge\frac{d\bar{z}_1}{\bar{z}_1}\cdots\wedge\frac{dz_n}{z_n}\wedge\frac{d\bar{z}_n}{\bar{z}_n}}=\det\left(\frac{\partial^2u}{\partial x_i\partial x_j}\right)
\end{equation}
\end{lem}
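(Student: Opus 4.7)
The plan is to reduce everything to the dense orbit $(\mathbb{C}^*)^n \subset X_\triangle$, where the logarithmic coordinates $x_j = \log|z_j|^2$ linearize the torus action, and then import the global conclusions from the smooth extension of $\omega$ to $X_\triangle$ together with the standard moment-map picture for toric varieties.

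First I would produce the potential: by the $\partial\bar{\partial}$-lemma $\omega$ has a smooth local potential $\tilde{u}$ on the dense orbit, and averaging $\tilde{u}$ over $(S^1)^n$ gives an $(S^1)^n$-invariant potential, which is therefore a function only of $|z_1|^2,\dots,|z_n|^2$, hence only of $x_1,\dots,x_n$. Call this function $u(x)$. A direct chain-rule computation gives
\[
u_{z_j \bar{z}_k} = \frac{1}{z_j \bar{z}_k}\frac{\partial^2 u}{\partial x_j \partial x_k},
\]
so positivity of $\omega$ is equivalent to positive definiteness of the real Hessian $(\partial^2 u/\partial x_i \partial x_j)$, i.e.\ strict convexity of $u$. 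Taking the $n$-fold wedge of $\partial\bar{\partial}u$ in these coordinates and comparing with $\bigwedge_i (dz_i/z_i)\wedge(d\bar{z}_i/\bar{z}_i)$ yields the volume-form identity \eqref{toricvol} by a direct determinant computation.

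Next I would establish the moment-map condition. The vector field $\partial/\partial\theta_j$ generating the $j$-th circle action satisfies $\iota_{\partial/\partial\theta_j}\omega = d(\partial u/\partial x_j)$ (again by direct differentiation), so the moment map for the $(S^1)^n$-action on $(X_\triangle,\omega)$ is, on the dense orbit, $\mu(x) = Du(x)$. By the Atiyah--Guillemin--Sternberg convexity theorem applied to the compact Hamiltonian $(S^1)^n$-manifold $(X_\triangle,\omega)$, the image of $\mu$ is a convex polytope whose vertices are the moment-map images of the isolated $(\mathbb{C}^*)^n$-fixed points; since $[\omega] = c_1(L)$ and $\triangle$ is the defining polytope of $(X_\triangle, L)$, this image is exactly $\triangle$, so $Du(\mathbb{R}^n) = \mathrm{int}(\triangle)$. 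Properness of $u$ then drops out: by strict convexity $Du$ is injective, and since $Du(\mathbb{R}^n)$ is contained in the bounded set $\triangle$, the standard Legendre duality for convex functions (or the elementary observation that a strictly convex function whose gradient surjects onto a bounded open convex set must grow at infinity) forces $u(x)\to +\infty$ as $|x|\to\infty$.

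The main obstacle is the identification $Du(\mathbb{R}^n) = \mathrm{int}(\triangle)$, which is the one place where the global toric structure of $X_\triangle$ genuinely enters; everything else is a local coordinate computation on $(\mathbb{C}^*)^n$. In practice I would either invoke Atiyah--Guillemin--Sternberg together with the Delzant correspondence, or, for a more hands-on argument, analyze the asymptotic behavior of $u$ along each ray of $\mathbb{R}^n$ using the smooth extension of $\omega$ across the corresponding toric divisor, recovering one facet of $\triangle$ per ray in the fan.
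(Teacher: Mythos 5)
The paper offers no proof of this lemma at all: it is stated as a standard fact of toric K\"ahler geometry (implicitly deferring to \cite{Oda} and to the setup of \cite{WZ}), so there is no argument of the author's to compare yours against. Your proposal is the standard argument and is essentially correct: the chain rule $u_{z_j\bar z_k}=u_{x_jx_k}/(z_j\bar z_k)$, the equivalence of positivity with convexity, the determinant identity \eqref{toricvol}, and the identification of $Du$ with the moment map are all right, and your $Du(\mathbb{R}^n)=\mathrm{int}(\triangle)$ is in fact the precise version of the paper's (customary) abuse of notation $Du(\mathbb{R}^n)=\triangle$.

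Two points deserve tightening. First, the existence of a \emph{global} invariant potential on the dense orbit is not just the $\partial\bar\partial$-lemma plus averaging: $(\mathbb{C}^*)^n$ retracts onto $(S^1)^n$, so $H^2((\mathbb{C}^*)^n;\mathbb{R})\neq 0$ for $n\geq 2$, and you must first argue that $[\omega]$ restricts to zero on the open orbit. This does hold, because $H^2(X_\triangle;\mathbb{R})$ is spanned by the classes of the boundary toric divisors, each of which restricts trivially to their complement; after that, Steinness of the orbit gives a $\partial\bar\partial$-potential and averaging makes it invariant. Second, your ``elementary observation that a strictly convex function whose gradient surjects onto a bounded open convex set must grow at infinity'' is false as a general principle (on $\mathbb{R}$ one can have $Du(\mathbb{R})=(0,2)$ with $u$ bounded as $x\to-\infty$); what you need is that the origin lies in the interior of $Du(\mathbb{R}^n)$, so that $u(x)\geq \sup_{|p|\leq\epsilon}(\langle p,x\rangle-u^*(p))\geq \epsilon|x|-C$. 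This is available here precisely because $\triangle$ is a reflexive polytope containing $O$ in its interior, but it should be said.
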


Let $\{p_\alpha;\; \alpha=1,\cdots, N\}$ be all the 
lattice points of $\triangle$. Each $p_\alpha$ corresponds to a
holomorphic section $s_\alpha\in H^0(X_\triangle,
K^{-1}_{X_\triangle})$. We can embed $X_\triangle$ into
$\mathbb{P}^{N}$ using $\{s_\alpha\}$. Define $u$ to be
the potential on $(\mathbb{C}^*)^n$ for the pull back of
Fubini-Study metric (i.e.
$\frac{\sqrt{-1}}{2\pi}\partial\bar{\partial}u={{\omega}}_{FS}$):
\begin{equation}\label{u0}
u=\log\left(\sum_{\alpha=1}^N e^{<p_\alpha,x>}\right)+C
\end{equation}
$C$ is some constant determined by normalization condition:
\begin{equation*}
\int_{\mathbb{R}^n}e^{-u}dx=Vol(\triangle)=\frac{1}{n!}\int_{X_\triangle}{{\omega}}^n=\frac{c_1(X_\triangle)^n}{n!}
\end{equation*}
By the above normalization of $u$, it's easy to see that
\begin{equation*}
e^{h_{{\omega}}}=\frac{|\cdot|_{FS}^2}{|\cdot|_{{\omega}^n}^2}=\frac{e^{-u}}{{{\omega}}^n/(\frac{dz_1}{z_1}\wedge\frac{d\bar{z}_1}{\bar{z}_1}\cdots\wedge\frac{dz_n}{z_n}\wedge\frac{d\bar{z}_n)}
{\bar{z}_n}}
\end{equation*}
So
\begin{equation}\label{torich}
h_{{\omega}}=-\log\det(u_{ij})-u
\end{equation}

Now let's calculate the log-Futaki invariant for any 1-parameter subgroup in $(\mathbb{C}^*)^n$. Each 1-parameter subgroup in $(\mathbb{C}^*)^n$ is determined by some $\lambda\in\mathbb{R}^n$ such that the generating holomorphic vector field is
\[
v_\lambda=\sum_{i=1}^n\lambda_i z_i\frac{\partial}{\partial z_i}
\]

A general Calabi-Yau hypersurface $Y\in |-K_X|$ is a hyperplane section given by the equation:
\[
s:=\sum_{\alpha=1}^Nb(p_\alpha)z^{p_\alpha}=0
\]
By abuse of notation, we denote $\lambda(t)$ to be the 1 parameter subgroup generated by $v_\lambda$, then
\begin{equation}\label{actpoly}
\lambda(t)\cdot s= \sum_{\alpha=1}^N b(p_\alpha) t^{-\langle p_\alpha, \lambda\rangle} z^{p_\alpha}
\end{equation}

Let 
\[
W(\lambda)=max_{p\in\triangle}\langle p, \lambda \rangle
\]
Then $H_\lambda=\{p\in\mathbb{R}^n, \langle p, \lambda\rangle=W(\lambda)\}$ is a supporting plane of $\triangle$, and
 \[
\mathcal{F}_\lambda:=\{p\in\triangle; \langle p, \lambda\rangle=W(\lambda)\}=H_\lambda\cap\triangle
\]
 is a face of $\triangle$. 

We have $\lim_{t\rightarrow 0} [s]=\left[s_0:=\sum_{p_\alpha\in\mathcal{F}_\lambda}b(p_\alpha)z^{p_\alpha}\right]$, and  
by \eqref{actpoly}, the $C^*$-weight of $s_0$ is
$-W(\lambda)$. 

\begin{prop}
Let $F(K_X^{-1}, \beta Y)(\lambda)$ denote the Futaki invariant of the test configuration associated with the 1 parameter subgroup
generated by $v_\lambda$. We have
\begin{equation}\label{toricftk}
F(K_X^{-1}, \beta Y)(\lambda)=-\left(\beta\langle P_c, \lambda\rangle+(1-\beta)W(\lambda)\right)Vol(\triangle)
\end{equation}
\end{prop}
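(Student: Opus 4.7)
I would apply the algebraic log-Futaki formula \eqref{logftk} to the cycle $(1-\beta)Y$, so that
\begin{equation*}
F(K_X^{-1},\beta Y)(\lambda)=\frac{2(a_1b_0-a_0b_1)}{b_0}+(1-\beta)\left(-\tilde{a}_0+\frac{\tilde{b}_0}{b_0}a_0\right),
\end{equation*}
and compute each of the six asymptotic coefficients from the polytope $\triangle$ and the face $\mathcal{F}_\lambda$. Because $v_\lambda$ integrates to an action on $X_\triangle$, the induced test configuration of $(X_\triangle,K_X^{-1})$ is a product with central fibre $(X_\triangle,K_X^{-1})$ itself; only the divisor degenerates, via $Y\rightsquigarrow Y_0=\{s_0=0\}$ as in \eqref{actpoly}.

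First I harvest the standard toric data. The monomial basis $\{z^{p}\}_{p\in k\triangle\cap\Lambda}$ of $H^0(X_\triangle,K_X^{-k})$ diagonalises $\lambda(t)$ with eigenvalue $t^{-\langle p,\lambda\rangle}$, so $w_k=-\sum_{p\in k\triangle\cap\Lambda}\langle p,\lambda\rangle$. Leading-order Euler--Maclaurin gives $b_0=\mathrm{Vol}(\triangle)$ and $a_0=-\langle P_c,\lambda\rangle\,\mathrm{Vol}(\triangle)$. The classical toric computation of the ordinary Calabi--Futaki invariant in $c_1(X_\triangle)$ (going back to Mabuchi) gives
\[
\frac{2(a_1b_0-a_0b_1)}{b_0}=-\langle P_c,\lambda\rangle\,\mathrm{Vol}(\triangle).
\]

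Next I extract $\tilde{a}_0$ and $\tilde{b}_0$ from the short exact sequence afforded by $[Y]=K_X^{-1}$. Multiplication by a defining section of $Y$ yields
\[
0\to H^0(X_\triangle,L^{k-1})\to H^0(X_\triangle,L^k)\to H^0(Y,L^k|_Y)\to 0,
\]
so $\tilde{d}_k=d_k-d_{k-1}$ and $\tilde{b}_0=n\,\mathrm{Vol}(\triangle)$. Replacing $s$ by $s_0$ gives the same SES on the central fibre; by \eqref{actpoly} the $\mathbb{C}^*$-weight of $s_0$ equals $-W(\lambda)$, hence $\tilde{w}_k=w_k-w_{k-1}+W(\lambda)d_{k-1}$, and comparing $k^n$ coefficients gives $\tilde{a}_0=(n+1)a_0+W(\lambda)\,\mathrm{Vol}(\triangle)$.

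Substituting, the combination $-\tilde{a}_0+(\tilde{b}_0/b_0)a_0$ telescopes to $-a_0-W(\lambda)\,\mathrm{Vol}(\triangle)=(\langle P_c,\lambda\rangle-W(\lambda))\,\mathrm{Vol}(\triangle)$, and combining with the ordinary Futaki invariant yields $-(\beta\langle P_c,\lambda\rangle+(1-\beta)W(\lambda))\,\mathrm{Vol}(\triangle)$, as claimed. The main obstacle I anticipate is bookkeeping of signs: keeping the weight convention on $H^0(X,L^k)$ compatible with \eqref{coef1}, reading off the sign of the weight of $s_0$ from \eqref{actpoly}, and matching the notation $\beta Y$ in the proposition with the cycle $(1-\beta)Y$ entering \eqref{logftk}.
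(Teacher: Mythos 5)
Your proposal is correct and follows essentially the same route as the paper: apply the algebraic formula \eqref{logftk} with the divisor weighted by $(1-\beta)$, compute $a_0$, $b_0$ and the ordinary Futaki invariant from the toric/momentum data, and extract $\tilde{a}_0=(n+1)a_0+W(\lambda)b_0$ and $\tilde{b}_0=nb_0$ from the multiplication-by-$s_0$ exact sequence on the central fibre. The sign and $(1-\beta)$ bookkeeping you flag is exactly how the paper resolves it.
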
 

\begin{proof}
We will use the algebraic definition of log-Futaki invariant \eqref{logftk} to do the calculation.

Note that $(X, Y, K_X^{-1})$ degenerates to $(X, Y_0, K_X^{-1})$ under $\lambda$.

$Y_0$ is a hyperplane section of $X$, and $s_0\in H^0(X, K_X^{-1})$ is the defining section, i.e. 
$Y_0=\{s_0=0\}$. Then

\[
H^0(Y_0, K_X^{-1}|_{Y_0}^{\;k})\cong H^0(X, K_X^{-k})/(s_0\otimes H^0(X,K_X^{-(k-1)}))
\]
So
\[
\tilde{w}_k=w_k-(w_{k-1}-W(\lambda)d_{k-1})
\]
Plugging the expansions, we get
\[
\tilde{a}_0=(n+1)a_0+W(\lambda) b_0
\]
Note that $\tilde{b}_0=n b_0=n Vol(\triangle)$, we have
\[
-\tilde{a}_0+\frac{\tilde{b}_0}{b_0}a_0=-a_0-W(\lambda)b_0
\]
where
\[
-a_0=\int_X\theta_v\frac{{\omega}^n}{n!}=\int_{\mathbb{R}^n}\sum_i\lambda_iu_i\det(u_{ij})dx=\int_\triangle \sum_i\lambda_iy_idy=Vol(\triangle)
\langle P_c, \lambda\rangle
\]
By \eqref{torich}, the ordinary Futaki invariant is given by
\begin{eqnarray*}
F(c_1(X))(v_\lambda)&=&\int_X v(h_{{\omega}})\frac{{\omega}^n}{n!}=-\int_{\mathbb{R}^n}\sum_{i=1}^n\lambda_i\frac{\partial u}{\partial x_i}\det ({u}_{ij})dx\\
&=&-\int_\triangle \sum_{i}\lambda_iy_idy=-Vol(\triangle)\langle P_c,\lambda\rangle
\end{eqnarray*}

Substituting these into \eqref{logftk}, we get
\begin{eqnarray*}
F(K_X^{-1}, \beta Y)(\lambda)&=&-Vol(\triangle)\langle P_c, \lambda\rangle+(1-\beta)(Vol(\triangle)\langle P_c, \lambda\rangle-W(\lambda)Vol(\triangle))\\
&=&-(\beta\langle P_c, \lambda\rangle+(1-\beta)W(\lambda))Vol(\triangle)
\end{eqnarray*}
\end{proof}

\begin{proof}[Proof of Theorem \ref{main}]

Note that for any $P_\lambda\in\mathcal{F}_\lambda\subset\partial\triangle$, $W(\lambda)=\langle P_\lambda , \lambda\rangle$.
By Theorem \ref{thm1}, we have
\begin{eqnarray*}
F(K_X^{-1}, \beta Y)(\lambda)&=&\left(\frac{\beta}{1-\beta}\frac{1-R(X)}{R(X)} \langle Q, \lambda\rangle-W(\lambda)\right) (1-\beta)Vol(\triangle)\\
&=&\langle Q_\beta-P_\lambda, \lambda\rangle
\end{eqnarray*}
where $Q_\beta=\frac{\beta}{1-\beta}\frac{1-R(X)}{R(X)} Q$. 

Note that $\lambda$ is a outward normal vector of $H_\lambda$. By convexity of $\triangle$, it's easy to see that (see the picture after Example \ref{example2})
\begin{itemize}
\item
$\beta< R(X)$:  $Q_\beta\in \triangle^{\circ}$.  For any $\lambda\in\mathbb{R}^n$,
$
\langle Q_\beta-P_\lambda, \lambda\rangle<0
$.
\item
$\beta=R(X)$: $Q_\beta=Q\in \partial
\triangle$.  For any $\lambda\in\mathbb{R}^n$, 
$
\langle Q_\beta-P_\lambda, \lambda\rangle\le 0
$.
Equality holds if and only if  $\langle Q, \lambda\rangle=W(\lambda)$, i.e. $H_\lambda$ is a supporting plane of $\triangle$ at point $Q$.
\item
$\beta>R(X)$: $Q_\beta\notin\overline{\triangle}$. There exists $\lambda\in\mathbb{R}^n$ such that $\langle Q_\beta-P_\lambda, \lambda\rangle>0$

\end{itemize}
\end{proof}

\subsection{Example}
\begin{enumerate}
\item
$X_\triangle=Bl_p\mathbb{P}^2$. See the picture in Introduction. $P_c=\frac{1}{4}(\frac{1}{3}, \frac{1}{3})$, $Q=-6 P_c\in\partial\triangle$, so
$R(X)=\frac{6}{7}$. 

If we take $\lambda=\langle -1, -1\rangle$, then $W(\lambda)=1$. So by \eqref{toricftk}
\[
F(K_X^{-1}, \beta Y)(\lambda)=\frac{2}{3}\beta-4(1-\beta)
\]

So $F(K_X^{-1}, \beta Y)(\lambda)\le0$ if and only if $\beta\le\frac{6}{7}$, and equality holds exactly when $\beta=\frac{6}{7}$.
\item \label{example2}
$X_\triangle=Bl_{p,q}\mathbb{P}^2$,
$P_c=\frac{2}{7}(-\frac{1}{3},-\frac{1}{3})$,
$Q=-\frac{21}{4}P_c\in\partial\triangle$, so
$R(X_\triangle)=\frac{21}{25}$. 

If we take $\lambda_1=\langle 1,1\rangle$, then $W(\lambda_1)=1$. By \eqref{toricftk},
\[
F(K_X^{-1}, \beta Y)(\lambda_1)=\frac{2}{3}\beta -\frac{7}{2}(1-\beta)
\]
$F(K_X^{-1}, \beta Y)(\lambda_1)\le 0$ if and only if $\beta\le\frac{21}{25}$. 

This is essentially the same as Donaldson's calculation in \cite{Dol11}.

\newpage

If we take $\lambda_3=\langle -1, 2\rangle$, then $W(\lambda_3)=\langle -1, 2\rangle\cdot\langle -1,1\rangle=3$. By \eqref{toricftk}
\[
F(K_X^{-1}, \beta Y)(\lambda_3)=\frac{1}{3}\beta-\frac{21}{2}(1-\beta)
\]

So $F(K_X^{-1}, \beta Y)(\lambda_3)\le 0$ if and only if $\beta\le\frac{63}{65}$ which means that $(X, \beta Y)$ is log-K-stable along $\lambda_3$ when 
$\beta\le\frac{21}{25}<\frac{63}{65}$.

\end{enumerate}

\vspace*{-25mm}
\begin{figure}[h] \label{pic}
\begin{center}
\setlength{\unitlength}{0.7mm}
\begin{picture}(80,70)
\put(-30,0){\line(1,0){60}} \put(0,-30){\line(0,1){60}}
\put(-21,-21){\line(0,1){42}} \put(-21,21){\line(1,0){21}}
\put(0,21){\line(1,-1){21}} \put(21,0){\line(0,-1){21}}
\put(-21,-21){\line(1,0){42}} \put(-2,-2){\line(1,1){12.5}}
\put(-2,-2){\circle*{2}} \put(10.5,10.5){\circle*{2}}
\put(-8,-8){$P_c$} \put(13,8){$Q_{\frac{21}{25}}$}

\put(-21,21){\line(2,1){10}} \put(-21,21){\line(-2,-1){10}} 
\put(-15,28){$H_3$}
\put(-23,23){$P_3$}
\put(-21,21){\vector(-1,2){6}}
\put(-24,30){$\lambda_3$}

\put(5, -21){\vector(0,-1){8}}
\put(6,-26){$\lambda_2$}
\put(3,-19){$P_2$}
\put(5,-21){\circle*{1.5}}

\put(7,14){\circle*{1.5}} \put(7,14){\vector(1,1){6}} 
\put(7 ,20){$\lambda_1$} \put(2,12){$P_1$}

\put(3.5,3.5){\circle*{1.5}} \put(6,2){$Q_{<\frac{21}{25}}$}  

\put(17.5,17.5){\circle*{1.5}} \put(19,19){$Q_{>\frac{21}{25}}$}  

\put(-21,21){\circle*{1.5}} \put(-21,-21){\circle*{1}} \put(21,-21){\circle*{1}}
\put(0,21){\circle*{1}}   \put(21,0){\circle*{1}}  \put(0,0){\circle*{1}} \put(-21,0){\circle*{1}}

\end{picture}
\end{center}
\end{figure}

\vspace*{20mm}

Department of Mathematics, Princeton University, Princeton, NJ
08544, USA

E-mail address: chil@math.princeton.edu
\end{document}